\let\emptyset\varnothing
\def\endthebibliography{%
	\def\@noitemerr{\@latex@warning{Empty `thebibliography' environment}}%
	\endlist
}
\newtheorem{thm}{Theorem}[section]
\let\NAT@parse\undefined
\title{
\vspace{-30mm} 
\small{
\normalfont This paper has been accepted for publication at the 23rd IEEE International Conference on Intelligent Transportation Systems, 2020. Please cite the paper as: \\ Yue Guan, Anuradha M. Annaswamy, and H. Eric Tseng. “Towards Dynamic Pricing for Shared Mobility on Demand using Markov Decision Processes and Dynamic Programming.” \textit{2020 IEEE 23rd International Conference on Intelligent Transportation Systems (ITSC).} IEEE, 2020.
}\\
\vspace{10mm} 
\LARGE \bf Towards Dynamic Pricing for Shared Mobility on Demand using Markov Decision Processes and Dynamic Programming
}
\author[1]{Yue Guan\thanks{Corresponding author. Email: guany@mit.edu.}}
\author[1]{Anuradha M. Annaswamy}
\author[2]{H. Eric Tseng}
\affil[1]{Department of Mechanical Engineering, Massachusetts Institute of Technology}
\affil[2]{Research and Advanced Engineering, Ford Motor Company}
\begin{document}
	
\newcommand{\figurepath}{../images}

\maketitle
% \thispagestyle{empty}
% \pagestyle{empty}

%%%%%%%%%%%%%%%%%%%%%%%%%%%%%%%%%%%%%%%%%%%%%%%%%%%%%%%%%%%%%%%%%%%%%%%%%%%%%%%%
\begin{abstract}
	
	In a Shared Mobility on Demand Service (SMoDS), dynamic pricing plays an important role in the form of an incentive for empowered passengers to decide on the ride offer. Strategies for determining the dynamic tariffs should be suitably designed so that the incurred demand and supply within the SMoDS platform are balanced and therefore economic efficiency is further achieved. In this manuscript, we formulate a discrete time Markov Decision Process (MDP) to determine the probability of acceptance of each empowered passenger that is desired by the SMoDS platform. The proposed MDP formulation is a versatile framework which is shown to explicitly accommodate passenger behavior and realize the desired system objective. Estimated Waiting Time (EWT) is utilized as a suitable metric to measure the balance between demand and supply, with the goal of regulating EWT around a target value. We propose the use of a Dynamic Programming algorithm to derive the optimal policy that achieves the regulation. Computational experiments are conducted to demonstrate effective regulation of EWT, through various scenarios. 
	%The overall demonstration is carried out offline. The MDP formulation together with the DP algorithm can be utilized to an online determination of dynamic tariff by integrating with our earlier works on passenger behavioral modeling and dynamic routing, and form the subject of future works.
		
\end{abstract}

% \medskip

\paragraph{Index Terms\textnormal{:}} Shared Mobility on Demand, Dynamic Pricing, Estimated Waiting Time, Markov Decision Process, Dynamic Programming, Lookahead Search, Smart Cities.

% \begin{keywords}
%	
%	Shared Mobility on Demand, Dynamic Pricing, Estimated Waiting Time, Markov Decision Process, Dynamic Programming, Lookahead Search, Smart Cities
	
% \end{keywords}

%%%%%%%%%%%%%%%%%%%%%%%%%%%%%%%%%%%%%%%%%%%%%%%%%%%%%%%%%%%%%%%%%%%%%%%%%%%%%%%%
\section{Introduction}
\label{intro}

Shared Mobility on Demand Service (SMoDS) has transformed urban mobility and introduced a continuum of solutions between the traditionally binary modes of private individual vehicles and public mass transit, leading to a range of services with different degrees of cost, flexibility, and carbon footprint. This manuscript pertains to an SMoDS solution that consists of customized dynamic routing and dynamic pricing. We build on our earlier works in \cite{guan2019dynamicrouting, guan2019cpt, annaswamy2018transactive} and propose a Markov Decision Process (MDP) formulation and a Dynamic Programming (DP) algorithm towards dynamic pricing for the SMoDS platform. 

Dynamic pricing has achieved remarkable successes in emerging ride sharing platforms such as Uber, Lyft, and Didi Chuxing, where passengers are empowered to have the option to decide whether to accept or decline the ride offers. Dynamic tariffs provide the incentive signals for the empowered passengers; they directly affect the probability with which the passengers are likely to accept the SMoDS ride offers. Dynamic pricing therefore helps balance the demand and supply within the SMoDS platform through leveraging the probability of acceptance of the empowered passengers, and hence further achieve better economic efficiency. The balance could be measured via several Key Performance Indicators, one of which is the Estimated Waiting Time (EWT). EWT corresponds to the average time that an upcoming passenger will wait until being picked up \cite{cohen2016using}. A large $\text{EWT}(t)$ indicates that demand exceeds supply at time $t$ and vice versa. Our focus in this manuscript is on the regulation of $\text{EWT}(t)$ around a target value $\text{EWT}^*$, by designing the probability of acceptance for each empowered passenger that is desired by the SMoDS platform, through the proposed MDP formulation and DP algorithm.  

We assemble two integral components, an MDP formulation that serves as the underlying framework to determine the desired probability of acceptance $p^*$ for each empowered passenger, and a DP algorithm to derive the optimal policy of the MDP for a specific scenario with an offline setup. The two components will help integrate human behavior into the SMoDS framework in an efficient manner. By combining $p^*$ determined via the methodology proposed in this manuscript with the passenger behavioral model developed in \cite{guan2019cpt}, the dynamic tariff that nudges the passenger towards $p^*$ and hence enhances the balance between demand and supply can be designed for the SMoDS platform. We demonstrate using computational experiments that $\text{EWT}(t)$ can be effectively regulated around $\text{EWT}^*$ for various $\text{EWT}^*$ values and for a time-varying $\text{EWT}^*$ as well. The extension of the DP algorithm to online scenarios is also discussed.
%The main contributions of this manuscript are integral building blocks of human-in-the-loop modeling of the SMoDS solution, including an MDP formulation that serves as the underlying framework to determine the desired probability of acceptance $p^*$ for each empowered passenger, and a DP algorithm to derive the optimal policy of the MDP in an offline context as a proof of concept. Computational experiments are conducted to demonstrate effective regulation of $\text{EWT}(t)$ around $\text{EWT}^*$ for various $\text{EWT}^*$ values and for a time-varying $\text{EWT}^*$. The extension of the DP algorithm to online scenarios is also discussed. With the desired probability of acceptance addressed in this manuscript, and by integrating with the passenger behavioral model from \cite{guan2019cpt}, the dynamic tariff that leads to the balance of supply and demand within the SMoDS platform can be derived. 

Dynamic pricing for the SMoDS has been quite a popular research topic during recently years \cite{lu2018surge, hu2019surge, ma2018spatio, banerjee2015pricing, korolko2018dynamic, kanoria2019near}. 
%\cite{ma2018spatio} proposes the spatio-temporal pricing mechanism that has prices be smooth in space and time hence drivers will not decline the dispatched rides to seek ones with higher returns nearby. 
\cite{banerjee2015pricing} develops a queueing-theoretic economic model for dynamic pricing and proves that it is more robust
%against fluctuations and mis-estimation of system parameters 
than static pricing.
\cite{kanoria2019near} develops a near optimal control for ride hailing platforms via mirror backpressure that maximizes payoff and derives the regret bound. 
\cite{ma2018spatio} proposes the spatio-temporal pricing mechanism that has prices be smooth in space and time therefore drivers will not decline the dispatched rides to seek ones with higher returns nearby. 
%\cite{korolko2018dynamic} utilizes the internal data from Uber and calibrates a steady-state model that verifies increased total welfare via dynamic pricing.
The main distinction between these earlier publications and our contributions here in this manuscript is the explicit accommodation of behavioral modeling for the empowered passengers in the SMoDS design through capturing their probability of acceptance of the service. In contrast to these earlier studies which require various conditions including discretization of locations and stationary states, our proposed problem formulation requires minimal assumptions and can be readily extended to a broad range of scenarios with varying demand patterns and operational objectives.
%The study proposed in this manuscript, to the best of our knowledge, differs from prior work in the way that it poses as a versatile framework leveraging the human-in-the-loop modeling of empowered passengers to achieve global balance of demand and supply within the SMoDS platform. The MDP formulation proposed requires minimal assumptions and can be adopted in a broad range of scenarios including various demand patterns, operating objectives, etc.

\section{Preliminaries: Dynamic Routing and Dynamic Pricing}
\label{background}

The SMoDS solution we are developing consists of dynamic routing and dynamic pricing, which delivers a customized dynamic route and dynamic tariff to each passenger \cite{guan2019dynamicrouting, annaswamy2018transactive, guan2019cpt}. The overall schematic is illustrated in Fig. \ref{fig:schematic}, with three building blocks functioning in the following manner. When receiving a new ride request, the first block derives an optimized dynamic route that is to be offered to the passenger. The second and third blocks ensure dynamic pricing that accommodates the possibility that the empowered passenger may either accept or reject the ride offer and still generates an overall performance desired by the SMoDS platform. Of these, the second concerns a passenger behavioral model that derives the actual probability of acceptance for a specified dynamic tariff. The third and final block is the construction of a desired probability of acceptance for the passenger that will ensure the desired performances by the SMODS platform. Using this third block, one can then design the dynamic tariff that nudges the passenger towards the desired probability of acceptance by solving the inverse problem. We focus on determining desired probability of acceptance in this manuscript while only briefly introduce dynamic routing and passenger behavioral modeling in this section, and refer the readers to \cite{guan2019dynamicrouting} and \cite{guan2019cpt} for mores details. 
\begin{figure}[h!]
	\centering
	\includegraphics[width=0.95\textwidth]{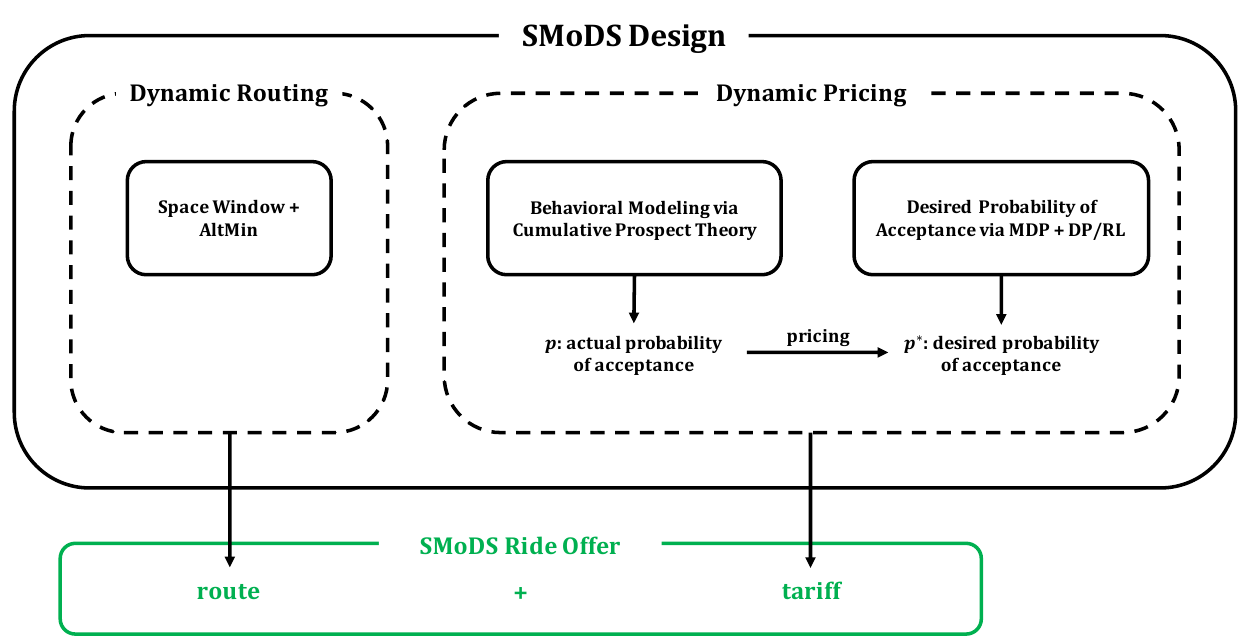}
	\caption{Overall schematic of the SMoDS design.}
	\label{fig:schematic}
\end{figure}

\subsection{Dynamic Routing}
\label{dynamicrouting}

An Alternating Minimization (AltMin) based optimization algorithm is developed for dynamic routing with added spatial flexibility enabled by space window \cite{guan2019dynamicrouting}. AltMin derives dynamic routes for passengers given their requested pickup, drop-off locations, willingness to walk and other service requirements that minimize a weighted sum of various travel time cost terms. AltMin has been demonstrated via various computational experiments to outperform classic constrained optimization formulations that are solved by standard solvers in terms of both computational complexity and optimality. 

In the current SMoDS design, AltMin on the one hand derives dynamic routes that are provided in the ride offer, on the other hand contributes to the MDP formulation towards dynamic pricing, by means of the functions $F_{\text{DR}}(\cdot)$, $F_{\text{wait}}(\cdot, \cdot)$, and $F_{\text{EWT}}(\cdot)$ which we formally introduce in Section \ref{MDP}.   

\subsection{Dynamic Pricing}
\label{dynamicpricing}

Dynamic pricing is divided into two phases as follows.  

\subsubsection{Passenger Behavioral Modeling}
\label{behavioral}

The first phase is passenger behavioral modeling. A passenger behavioral model is one that inputs the specifications of the SMoDS ride offer and alternative transportation options, then outputs the actual probability with which the passenger takes the SMoDS. According to discrete choice model \cite{ben1985discrete}, the actual probability of accepting the $\ell^{\text{th}}$ transportation option given $Q \in \mathbb{Z}_{>0}$ options to choose from is given by 
\begin{equation}
	\label{discrete_choice_model}
	p^{\ell} = \frac{e^{U_{\ell}}}{\sum_{q=1}^Q e^{U_q}}, \quad \forall \ell \in \{1, \cdots, Q\}
\end{equation}
Here we apply Cumulative Prospect Theory (CPT) in passenger behavioral modeling to capture subjective decision making of passengers when facing risk or uncertainty. This is because the SMoDS is exposed to uncertainty since the vehicles need to accommodate new passengers at anytime during the route therefore the service quality is to some extent stochastic. $U_{\ell}$ denotes the utility of taking the $\ell^{\text{th}}$ option that is subjectively perceived by the passenger, which can be computed via the framework we developed in \cite{guan2019cpt}.

\subsubsection{Desired Probability of Acceptance}
\label{desired}

Given the ride specifications, the passenger is empowered to take the SMoDS with certain probability. The probability of acceptance impacts the expected performances of the SMoDS platform. With the dynamic routes derived via AltMin, dynamic pricing serves as an incentive to tune this probability. As discussed in Section \ref{intro}, the goal is to regulate $\text{EWT}(t)$ around the target value $\text{EWT}^*$ and hence to enhance the balance between demand and supply within the SMoDS platform. Therefore, the probability of acceptance desired by the SMoDS platform should minimize the average deviation of the incurred $\text{EWT}(t)$ from $\text{EWT}^*$, for which we define
\begin{equation}
	\label{objective}
	R_{\text{total}} = - \frac{1}{T}\int_{0}^{T}|\text{EWT}(t) - \text{EWT}^*| \; dt	
\end{equation}
where $T > 0$ denotes the evaluation horizon. 
The problem can thus be posed as the maximization of the reward $R_{\text{total}}$ in (\ref{objective}), the negative of the average deviation of the incurred $\text{EWT}(t)$ from $\text{EWT}^*$.
Note that how to choose an appropriate $\text{EWT}^*$ value is beyond the scope of this manuscript, which requires knowledge of the demand and supply of the SMoDS platform, i.e., request pattern and fleet portfolio, and explicitly stated objectives, e.g., weighted combination of revenue and ridership. This is one of our future directions. We assume that $\text{EWT}^*$ is given unless otherwise stated. 

% From (\ref{discrete_choice_model}), we obtain $p$ the actual probability of acceptance from each empowered passenger. The desired value $p^*$ is to be derived via the methodology developed in this manuscript. We then derive the dynamic tariff that nudges the passenger towards $p^*$ by simply setting $p = p^*$ and solving the inverse problem.
From (\ref{discrete_choice_model}), we obtain $p = f(\mu)$ the actual probability of acceptance from each empowered passenger as a function of the dynamic tariff $\mu$. 
The desired value $p^*$ is to be derived via the methodology developed in this manuscript. We then derive the dynamic tariff ${\mu}^*$ that nudges the passenger towards $p^*$ by simply setting $p = p^*$ and solving the inverse problem ${\mu}^* = f^{-1}(p^*)$.     

\section{A Markov Decision Process Formulation}
\label{MDP}

In this section, we propose a discrete time MDP formulation for determining the optimal desired probability of acceptance for each passenger that makes a ride request.
The agent is the SMoDS server, while the environment denotes everything else including both the fleet and passengers. 
The MDP is defined using the tuple $\langle \mathcal{S}, \mathcal{A}, \mathcal{P}, \mathcal{R}, \gamma \rangle$ \cite{sutton2018reinforcement}, which denote the state space, action space, state transition function, reward function, and discount factor, respectively. 
The reward function corresponds to the regulation of $\text{EWT}(t)$ around the target value $\text{EWT}^*$.
The objective is to derive the optimal policy for the agent so that the corresponding action is the optimal desired probability of acceptance for each new passenger. This optimal desired probability of acceptance in turn will maximize the expected total discounted rewards.  
In what follows we will describe each component of the MDP formulation in greater detail. 

We first make a few comments regarding the formulation: 
%In the MDP formulation, the agent is the SMoDS server, while the fleet as well as the passengers including both existing and future ones make up the environment. The action of the agent is to design the desired probability of acceptance for each new passenger. The goal is to derive the optimal policy that maximizes the expected total discounted rewards of the system. The MDP is defined using the tuple $\langle \mathcal{S}, \mathcal{A}, \mathcal{P}, \mathcal{R}, \gamma \rangle$ \cite{sutton2018reinforcement}, which denote the state space, action space, state transition function, discount factor, and reward function, respectively. Several clarifications are made as follows before elaborating each of the elements. 
\begin{itemize}
	\item The overall problem is hybrid in nature, with ride requests occurring hence actions taken at $k \in \mathbb{Z}_{>0}$, which correspond to discrete time instances $t_r = k \Delta t_r$ with $\Delta t_r \in \mathbb{R}_{>0}$ defined as the time interval between any two consecutive requests. The state of the environment is defined in continuous time as it reflects continuous quantities such as travel times.  
	%\item The agent receives ride requests in discrete time which are $\Delta t_r \in \mathbb{R}_{>0}$ apart, and $t_r = k \Delta t_r, k \in \mathbb{Z}_{>0}$ is denoted as a general notion of these discrete timestamps. 
	%\item The action of the agent is to design the desired probability of acceptance for each new passenger at $t_r$. We assume that by suitably designing the SMoDS ride offer, the passenger will accept the offer with the desired probability. Hence the current MDP formulation is therefore fully observable.
	%\item The MDP formulation is discrete time since actions are taken only at discrete times when requests are received. While the state can be defined in continuous time hence the formulation is embedded in a hybrid framework.  
%	\item The actions are taken sequentially, one at a time, and are dependent on the decisions from previous passengers in response to the previous actions.   
%	\item The SMoDS fleet portfolio is assumed to be fixed. 
	\item The notations followed are similar to those in Sutton \cite{sutton2018reinforcement} and Silver \cite{silver2015reinforcement}. Whenever an uppercase symbol is used to denote a random variable, the corresponding lowercase symbol is assumed to denote a realization of that random variable. Calligraphic symbols are used to represent the state and action spaces as well as the state transition and reward functions. 
\end{itemize}

\subsection{State Space $\mathcal{S}$}
\label{state}

We denote $S_t \in \mathcal{S}$ as the state of the environment at time $t \in \mathbb{R}_{\geq 0}$, which corresponds to 
%In addition, at discrete times $t_r$ of any new ride request received by the SMoDS server, $S_{{t_r}^-}$ and $S_{{t_r}^+}$ denote the states right before and after the server processes the request and the passenger decides whether to take the ride offer or not, respectively. Recall that the discrete time MDP formulation is embedded in a hybrid framework, therefore though actions are taken only at discrete times, states are defined in continuous time. 
%$S_t$ describes the status of the SMoDS platform at $t$, which consists of the statuses of 
both the fleet and passengers as well as any unprocessed ride request. Denote $m_t, n_t \in \mathbb{Z}_{\geq 0}$ as the numbers of active vehicles and passengers at $t$ within the SMoDS platform, respectively. Then
\begin{equation}
	\label{define_S_t}
	S_t = \Big[\big\{S_t^{v_i}\big\}_{i \in [m_t]}, \big\{S_t^{p_j}\big\}_{j \in [n_t]}, \Omega_{t} \Big]
\end{equation}  
where $S_t^{v_i}$ and $S_t^{p_j}$ denote the state of vehicle $v_i$ and passenger $p_j$, respectively. $[X] \triangleq \{1,2,\cdots,X\}$, $\forall X \in \mathbb{Z}_{\geq 0}$. $\Omega_t$ denotes the specifications of the ride request received by the agent at $t$ with distribution $f_{\Omega_{t}}(\omega_{t})$ which is assumed to be known and independent on the other components of the state. $\Omega_t = \emptyset$ if $t \neq t_r$. 

$S_t^{v_i}$ and $S_t^{p_j}$ are defined in (\ref{state_vehicle}) and (\ref{state_passenger}), respectively. 	
\begin{equation}
	\label{state_vehicle}
	S_t^{v_i} = [L_t^{v_i}, G_t^{v_i}, O_t^{v_i}]
\end{equation}
$L_t^{v_i}$ denotes the location of vehicle $v_i$, $G_t^{v_i}$ denotes a set of consecutive dynamic routing points of the route of $v_i$, and $O_t^{v_i}$ corresponds to other status indicators of $v_i$, all at time $t$. Examples of $O_t^{v_i}$ include the number of passengers onboard. 
\begin{equation}
	\label{state_passenger}
	S_t^{p_j} = [L_t^{p_j}, G_t^{p_j}, O_t^{p_j}]
\end{equation}  
$L_t^{p_j}$ indicates the location of passenger $p_j$, $G_t^{p_j}$ denotes the requested and negotiated pickup and drop-off locations therefore the dynamic route of $p_j$, and similarly $O_t^{p_j}$ corresponds to other status indicators of $p_j$, all at time $t$. Examples of $O_t^{p_j}$ include the positions of $p_j$ in the request, pickup, and drop-off queues if the maximum position shift constraints are imposed \cite{guan2019dynamicrouting}. 

\subsection{Action Space $\mathcal{A}$}
\label{action}

We denote $A_{t_r} \in \mathcal{A}$ as the action taken by the agent at $t_r$. The action space of $A_{t_r} = a_{t_r}$ is given by
%apparently $A_{t_r} \in [0, 1]$. More specifically, since the desired probability of acceptance is realized through the design of the SMoDS ride offer which ultimately equates the actual probability of acceptance defined in (\ref{discrete_choice_model}), and considering that the dynamic tariff should be charged within a reasonable range, for example, it should not be negative nor unrealistically high compared with that of the alternative transportation options, therefore $A_{t_r} = a_{t_r}$ lies in a strict subset of $[0, 1]$, i.e., 
\begin{equation}
	\label{action_range}
	a_{t_r} \in \Big [\underline{a_{t_r}}, \overline{a_{t_r}} \Big ], \, 0 < \underline{a_{t_r}} < \overline{a_{t_r}} < 1
\end{equation}  
%(\ref{action_range}) is essentially the action space of the agent at $t_r$, 
where $\underline{a_{t_r}}$ and $\overline{a_{t_r}}$ denote the lower and upper bounds of $a_{t_r}$ respectively, both of which are functions of the ride specifications of the SMoDS and the alternative transportation options. 
% Strictly speaking, $\underline{a_{t_r}}$ could be zero and $\overline{a_{t_r}}$ could be unity. However, 
Since $a_{t_r}$ corresponds to the desired probability of acceptance and the dynamic tariff in turn is affected by $a_{t_r}$ which should be charged within a reasonable range, we choose tighter bounds as in (\ref{action_range}).
%which is realized through the design of the SMoDS ride offer and ultimately equates the actual probability of acceptance defined in (\ref{discrete_choice_model}), and considering that the dynamic tariff in turn is affected by $a_{t_r}$ which should be charged within a reasonable range, we choose these bounds as in (\ref{action_range}). 
%denoted as
%\begin{equation}
%	\label{action_bounds}
%	\Big \{ \underline {A_{t_r}}, \overline{A_{t_r}} \Big \} = F_b(S_{t_r}, \Lambda_{t_r})
%\end{equation} 
%$S_{t_r}$ is utilized to derive the specifications of the SMoDS ride offer via the dynamic routing algorithm, which together with $\Lambda_{t_r}$ the specifications of the alternatives determine the space of action $A_{t_r}$. 
% In fact, the optimal action $a^*_{t_r}$ that maximizes the expected total discounted rewards only takes the value at either end point in (\ref{action_range}),
% of the corresponding action space $\Big [ \underline {a_{t_r}}, \overline{a_{t_r}} \Big ]$, 
% which we formally state in Theorem \ref{thm:optimal_action}. 
The optimal action that maximizes the expected total discounted rewards is stated in Theorem \ref{thm:optimal_action}.  
\begin{thm}
	\label{thm:optimal_action}
	$\forall t_r$, we have the optimal action
	\begin{equation}
		\label{optimal_action}
		a^*_{t_r}  \in \Big \{ \underline {a_{t_r}}, \overline{a_{t_r}} \Big \}
	\end{equation} 
\end{thm}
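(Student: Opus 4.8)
The plan is to show that, for each fixed state $s_{t_r}$, the state--action value $Q^*(s_{t_r}, \cdot)$ is an \emph{affine} function of the action $a_{t_r}$, and then to invoke the elementary fact that an affine scalar function on a compact interval attains its maximum at one of the two endpoints. This turns Theorem \ref{thm:optimal_action} into a ``bang--bang'' optimality statement.

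First I would make precise how the action enters the dynamics. Since $a_{t_r}$ is by construction the desired probability with which the empowered passenger accepts the offer, conditioned on $s_{t_r}$ the acceptance event is Bernoulli with parameter $a_{t_r}$, and this is the \emph{only} channel through which $a_{t_r}$ influences the environment. Hence the transition kernel factors as
\begin{equation}
	\label{kernel_decomp}
	\mathcal{P}(\cdot \mid s_{t_r}, a_{t_r}) = a_{t_r}\, \mathcal{P}_{\mathrm{acc}}(\cdot \mid s_{t_r}) + (1 - a_{t_r})\, \mathcal{P}_{\mathrm{rej}}(\cdot \mid s_{t_r}),
\end{equation}
where $\mathcal{P}_{\mathrm{acc}}$ and $\mathcal{P}_{\mathrm{rej}}$ are the laws of the next-request state $S_{t_{r+1}}$ conditioned respectively on the passenger accepting or declining, and neither these kernels, nor the intervening continuous--time evolution of fleet and passengers, nor the distribution $f_{\Omega_{t}}$ of the next request depends on $a_{t_r}$ (cf. Section \ref{state}). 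The same convex combination governs the expected one--step reward, $\mathbb{E}[\mathcal{R} \mid s_{t_r}, a_{t_r}] = a_{t_r}\, r_{\mathrm{acc}}(s_{t_r}) + (1 - a_{t_r})\, r_{\mathrm{rej}}(s_{t_r})$, because the integrated deviation of $\text{EWT}(t)$ from $\text{EWT}^*$ accrued over $[t_r, t_{r+1})$ is a function of the post--decision state only.

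Next I would substitute \eqref{kernel_decomp} into the Bellman optimality equation $Q^*(s_{t_r}, a_{t_r}) = \mathbb{E}[\mathcal{R} \mid s_{t_r}, a_{t_r}] + \gamma\, \mathbb{E}\!\left[V^*(S_{t_{r+1}}) \mid s_{t_r}, a_{t_r}\right]$ with $V^*(s) = \max_{a} Q^*(s, a)$. Because $V^*$ is integrated against the mixture kernel in \eqref{kernel_decomp}, the continuation term equals $\gamma\big(a_{t_r}\, \mathbb{E}_{\mathcal{P}_{\mathrm{acc}}}[V^*] + (1 - a_{t_r})\, \mathbb{E}_{\mathcal{P}_{\mathrm{rej}}}[V^*]\big)$, which is affine in $a_{t_r}$; combined with the affine reward term this gives $Q^*(s_{t_r}, a_{t_r}) = \alpha(s_{t_r})\, a_{t_r} + \beta(s_{t_r})$ on $[\underline{a_{t_r}}, \overline{a_{t_r}}]$. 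An affine function on a closed interval is maximized at the left endpoint when its slope is nonpositive and at the right endpoint when its slope is nonnegative, so an optimal action may always be taken in $\{\underline{a_{t_r}}, \overline{a_{t_r}}\}$, which is exactly \eqref{optimal_action}.

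The step I expect to be the main obstacle is the rigorous justification of the decomposition \eqref{kernel_decomp} in the hybrid setting: one must argue that $a_{t_r}$ affects everything downstream — including $V^*(S_{t_{r+1}})$, which itself embeds all future optimized actions — \emph{solely} through which post--decision state is realized at $t_r$, so that the expectation genuinely distributes over the Bernoulli mixture and linearity is preserved. The continuous--time evolution of the environment between consecutive requests and the assumed independence of $\Omega_t$ from the rest of the state are precisely what make this argument go through. A minor accompanying point is to note that $V^*$ is well defined and finite (e.g. bounded per--step rewards with $\gamma < 1$, or a finite horizon, the latter handled by backward induction on the same affine structure), so that the Bellman equation holds and the maximum over the compact action interval is attained.
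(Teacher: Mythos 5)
Your proposal is correct and follows essentially the same route as the paper's proof: the paper likewise expands the Bellman optimality equation over the Bernoulli decision $D_{t_r}$, observes that the accept/reject rewards and continuation values (with remaining expectations taken only over $\Omega_{t_r+\Delta t_r}$) do not depend on $a_{t_r}$, and concludes that the objective is linear in $a_{t_r}$ so the maximum is attained at an endpoint of $[\underline{a_{t_r}}, \overline{a_{t_r}}]$. Your kernel decomposition $a_{t_r}\mathcal{P}_{\mathrm{acc}} + (1-a_{t_r})\mathcal{P}_{\mathrm{rej}}$ is just an explicit restatement of the paper's transition function in (\ref{transition_yes})--(\ref{transition_combine}), so the two arguments coincide.
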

The proof is provided in the Appendix.

Once the action is taken by the agent and delivered to the passenger, the passenger responds with the decision on the ride offer. We denote $D_{t_r}$ as the passenger decision at $t_r$, where $d_{t_r} = 1$ represents acceptance while $d_{t_r} = 0$ represents rejection. $D_{t_r}$ obeys the Bernoulli distribution as
\begin{equation}
	\label{d_distribution}
	D_{t_r} \sim \text{B}(1, a_{t_r})
\end{equation} 

\subsection{State Transition Function $\mathcal{P}$ }
\label{transition}

In this subsection, we derive the state transition function $\mathcal{P}_{xy}^z$ from state $x$ to state $y$ under action $z$. If no action $z$ is taken, the superscript is omitted.
Three different scenarios are presented, where the first is due to continuous time dynamics while the others are due to events occurred at discrete instances.
For the latter, recall that $t_r$ denotes the time instance when a new ride request is received, and we define $t_r^-$ and $t_r^+$ as time instances right before and after the passenger responds to the agent action with a decision.
The three scenarios to evaluate the state transition are detailed as follows.   
% We start with $S_t$ that are defined in the continuous time domain, there are three distinct scenarios where the state could transit from one to another as follows. 

\subsubsection{State Transition due to Internal Dynamics}
\label{transition_one}
When there is no new ride request received nor existing request processed, the transition is purely due to the internal dynamics of the environment, i.e., movements of vehicles and passengers following the routes, and any incurred status changes. This type of state transition is deterministic and described as
\begin{equation}
	\label{transition_no}
	\mathcal{P}_{s_t s_{t + \tau}} = 
		\begin{cases}
			1, \, & \text{if} \, s_{t + \tau} = F_{\text{ID}}(s_t, \tau) \\
			0, \, & \text{otherwise}
		\end{cases}
\end{equation} 
(\ref{transition_no}) is valid when no request is received or processed within $[t, t + \tau], \forall t, \tau \geq 0$. $F_{\text{ID}}(\cdot, \cdot)$ captures the internal dynamics.
% , i.e., vehicles and passengers moving following the routes and updating status indicators. 
% We omit the superscript in the left hand side of (\ref{transition_no}) that represents actions since there is no action taken herein. 

\subsubsection{State Transition due to Receiving a New Request}
\label{transition_two}
When there is a new request $\omega_{t_r}$ received at $t_r$, the state transition immediately after is given by
% occurs via augmenting the current state by the request specifications
\begin{equation}
	\label{transition_yes_receive}
	\mathcal{P}_{s_{t_r^-} s_{t_r}} = 
	\begin{cases}
	f(\omega_{t_r}), \, & \text{if} \, s_{t_r} =[s_{t_r^-} , \omega_{t_r}] \\
	0, \, & \text{otherwise} 
	\end{cases}
\end{equation}  
$f(\omega_{t_r})$ denotes the distribution of $\Omega_{t_r}$ where the subscript $\Omega_{t_r}$ is omitted for ease of notation. 
$s_{{t_r}^-}$ and $s_{{t_r}^+}$ denote the states right before and after the server processes the request received at $t_r$ and the passenger decides on the ride offer, respectively.

\subsubsection{State Transition due to Processing an Existing Request}
\label{transition_three}
When the agent takes an action to process the request and the passenger responds with the decision on the offer, a state transition occurs by updating the status of the vehicles and passengers according to the decision and eliminating the corresponding request if rejected. That is, the state transition immediately after a request-processing is given by 
\begin{equation}
	\label{transition_yes}
	\mathcal{P}_{s_{t_r} s_{t_r^+}} ^ {a_{t_r}} = 
		\begin{cases}
			a_{t_r}, \, & \text{if} \, s_{t_r^+} = F_{\text{DR}}(s_{t_r}) \\
			1 - a_{t_r}, \, & \text{if} \, s_{t_r^+} = s_{t_r^-} \\
			0, \, & \text{otherwise} 
		\end{cases}
\end{equation}  
(\ref{transition_yes}) states that the passenger accepts the SMoDS ride offer with the probability of $a_{t_r}$ and as a result the state $s_{t_r}$ transits to $s_{t_r^+} = F_{\text{DR}}(s_{t_r})$. $F_{\text{DR}}(\cdot)$ denotes the dynamic routing algorithm, 
which updates the routes and hence the state transits due to accommodating the new passenger, 
%for example, the AltMin algorithm \cite{guan2019dynamicrouting} in the SMoDS solution we are developing, 
and is a function of $s_{t_r}$. The passenger may also decline the offer with the probability of $(1 - a_{t_r})$, in which case, 
%the state transits back to that right before receiving the request by eliminating $\omega_{t_r}$, i.e., 
$s_{t_r^+} = s_{t_r^-}$. 
 
The three scenarios discussed above have three distinct effects on the state transition. That is, either the states evolve only due to the internal dynamics $F_{\text{ID}}(\cdot, \cdot)$, due to the receiving of a new request, or due to the processing of an existing request. We now combine (\ref{transition_no}) through (\ref{transition_yes}) and describe the complete state transition over the interval $[k \Delta t_r, (k+1) \Delta t_r]$, which includes one processing of an existing request at $k \Delta t_r$ and one receiving of a new request at $(k+1)\Delta t_r$ as follows:
%when actions are taken 
\begin{equation}
\label{transition_combine}
	\mathcal{P}_{s_k s_{k+1}} ^ {a_k} = 
		\begin{cases}
			a_kf(\omega_{k+1}), \, & \text{if} \,  s_{k+1} = [F_{\text{ID}} ( F_{\text{DR}}(s_k), \Delta t_r ), \omega_{k+1} ] \\
			(1 - a_k)f(\omega_{k+1}), \, & \text{if} \, s_{k+1} = [F_{\text{ID}}(s_{k^-}, \Delta t_r), \omega_{k+1}] \\
			0, \, & \text{otherwise} 
		\end{cases}
\end{equation}   
The three scenarios and the overall discrete time state transition from $s_k$ to $s_{k+1}$ are illustrated in Fig. \ref{fig:transition}.
\begin{figure}[h!]
	\centering
	\includegraphics[width=0.95\textwidth]{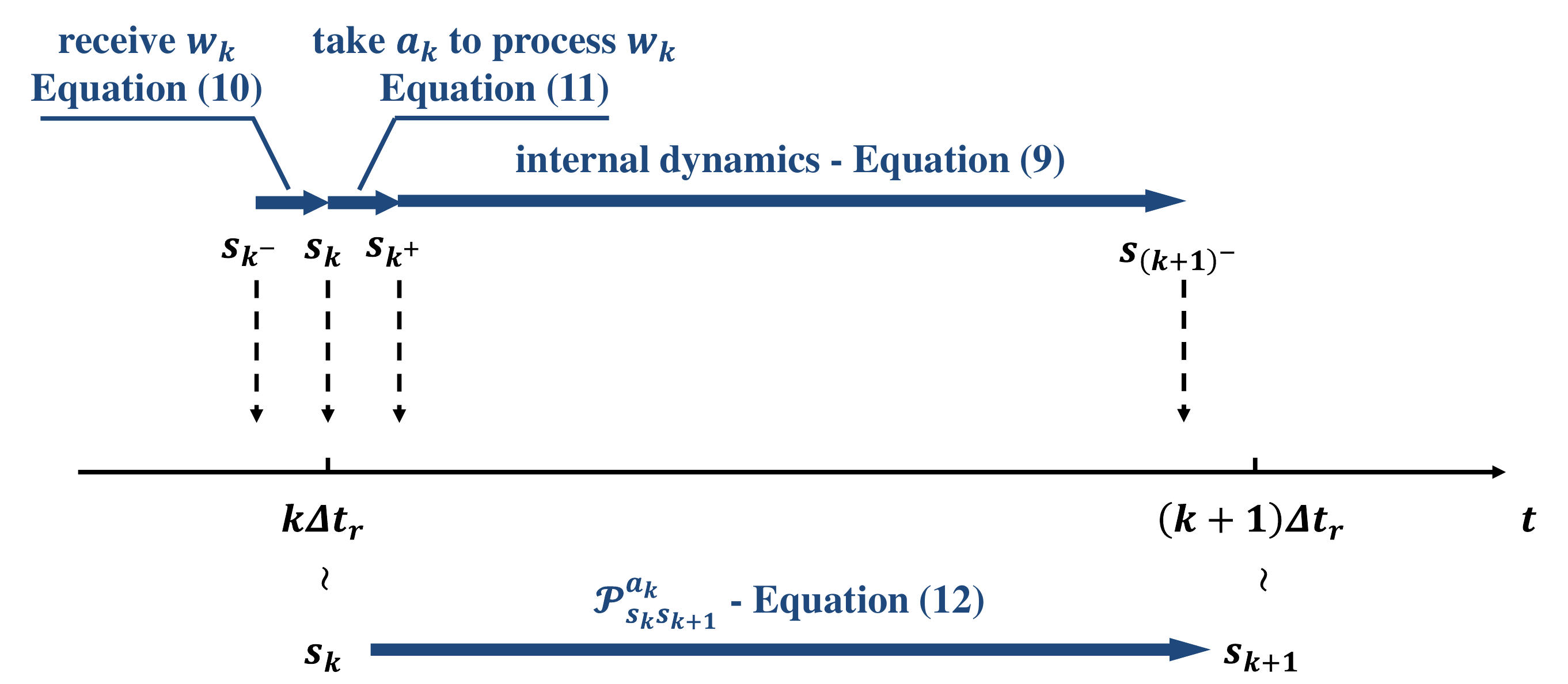}
	\caption{Illustration of state transition.}
	\label{fig:transition}
\end{figure}

\begin{thm}
	\label{thm:markov}
	The discrete time states $S_{t_r}$ defined in (\ref{define_S_t}) through (\ref{state_passenger}) with the transition function derived in (\ref{transition_combine}) are Markov.
\end{thm}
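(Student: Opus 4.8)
The plan is to verify the defining property of a (controlled) Markov chain directly: for every $k$ and every admissible history $s_0, s_1, \dots, s_k$ together with the action $a_k$, one must show that
\[
\mathbb{P}\big(S_{k+1} = s_{k+1} \mid S_k = s_k, A_k = a_k, S_{k-1} = s_{k-1}, \dots, S_0 = s_0\big) = \mathbb{P}\big(S_{k+1} = s_{k+1} \mid S_k = s_k, A_k = a_k\big),
\]
and that this common value is exactly the expression $\mathcal{P}_{s_k s_{k+1}}^{a_k}$ given in (\ref{transition_combine}). First I would decompose the one-step transition over $[k\Delta t_r, (k+1)\Delta t_r]$ into the three elementary transitions of Section \ref{transition}: the request-processing at $k\Delta t_r$ from (\ref{transition_yes}), the internal-dynamics evolution over the open interval from (\ref{transition_no}), and the arrival of a new request at $(k+1)\Delta t_r$ from (\ref{transition_yes_receive}). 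Chaining these three transitions reproduces precisely the two non-zero branches of (\ref{transition_combine}) — acceptance ($s_{k+1} = [F_{\text{ID}}(F_{\text{DR}}(s_k),\Delta t_r),\omega_{k+1}]$) and rejection ($s_{k+1} = [F_{\text{ID}}(s_{k^-},\Delta t_r),\omega_{k+1}]$) — so it suffices to argue that each elementary transition, and hence the composition, carries no dependence on $s_j$ for $j<k$.

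Next I would argue that the maps $F_{\text{DR}}(\cdot)$ and $F_{\text{ID}}(\cdot,\Delta t_r)$ are deterministic functions of their arguments alone, so they contribute neither randomness nor history-dependence; in particular $s_{k^-}$ is itself a deterministic function of $s_k$ (it is $s_k$ with its request component $\omega_k$ removed), so the rejection branch of (\ref{transition_combine}) is also expressed purely in terms of $s_k$. The only two stochastic ingredients are therefore: (a) the passenger decision $D_k \sim \mathrm{B}(1,a_k)$, whose law is fully pinned down by the action $a_k$ and, by (\ref{d_distribution}), is conditionally independent of $(S_{k-1},\dots,S_0)$ given the current state and action; and (b) the new request $\Omega_{k+1}$, which by the modelling assumption in Section \ref{state} has a known distribution $f_{\Omega_{k+1}}$ independent of the remaining state components and of the past. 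Multiplying the two independent factors — $a_k$ or $1-a_k$, and $f(\omega_{k+1})$ — yields exactly the coefficients in (\ref{transition_combine}), and no term in that expression references any earlier state. Collecting these facts, the conditional law of $S_{k+1}$ given the full history factors through $(S_k,A_k)$, which is the assertion of the theorem.

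I expect the main subtlety to be not a calculation but a careful justification that $F_{\text{ID}}$ genuinely absorbs every intra-interval event — pickups, drop-offs, updates of the position indices in $O_t^{p_j}$, vehicles becoming idle, and the associated changes in $m_t, n_t$ — as deterministic consequences of the routing points $G_t^{v_i}, G_t^{p_j}$ and locations already encoded in $S_k$ through (\ref{define_S_t})–(\ref{state_passenger}). In other words, the crux is that the state is defined richly enough that nothing occurring strictly between two consecutive request epochs can reintroduce dependence on the past; once that is granted, the remainder of the argument is the bookkeeping of composing (\ref{transition_no}), (\ref{transition_yes_receive}), and (\ref{transition_yes}) and reading off (\ref{transition_combine}).
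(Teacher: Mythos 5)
Your proposal is correct and follows essentially the same route as the paper's proof: both verify the Markov property directly from the form of the transition function (\ref{transition_combine}), observing that each nonzero branch depends only on $s_k$, the action, and fresh randomness from $D_k$ and $\Omega_{k+1}$. Your explicit remark that $s_{k^-}$ is a deterministic function of $s_k$ (namely $s_k$ with the request component removed) supplies the justification that the paper compresses into the single assertion $\mathbb{P}[S_{k+1}=s_{k+1}\mid S_k=s_k]=\mathbb{P}[S_{k+1}=s_{k+1}\mid S_k=s_k, S_{k^-}=s_{k^-}]$ for the second subcase.
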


The proof is provided in Appendix.

\subsection{Reward Function $\mathcal{R}$}
\label{reward}

In the following $R_x^y$ denotes the reward function at state $x$ following an action $y$, and $\mathbb{E}_X$ denotes the expectation of a random variable $X$. The total reward defined in (\ref{objective}) is examined over one interval $[t_r^+, {(t_r + \Delta t_r)}^-]$, i.e., immediately after the agent takes the action at $t_r$ and right before the next action is about to take place at $(t_r + \Delta t_r)$. This is given by 
%As has been discussed in Section \ref{background}, the objective is to regulate $\text{EWT}(t)$ around $\text{EWT}^*$. We break down the average deviation defined in (\ref{objective}) into segments by timestamps of receiving requests, and the reward is therefore defined as the direct contribution to (\ref{objective}) of taking action $a_{t_r}$ at state $s_{t_r}$
\begin{equation}
	\label{reward_function}
		\begin{split}
			\mathcal{R}_{s_{t_r}}^{a_{t_r}} = - \mathbb{E}_{D_{t_r}} \bigg[ \frac{1}{\Delta t_r} \int_{t_r^+}^{{(t_r + \Delta t_r)}^-}  |\text{EWT}(\tau) - \text{EWT}^*| \; d\tau \bigg] 
%			& = - \mathbb{E}_{D_{t_r}} \bigg[ \frac{1}{\Delta t_r} \int_{t_r^+}^{{(t_r + \Delta t_r)}^-}  |F_{\text{EWT}}(S_{\tau}) - \text{EWT}^*| \; d\tau \bigg]
		\end{split}
\end{equation}
It is clear from (\ref{reward_function}) that the quantity on the right-hand side represents the immediate impact of action $a_{t_r}$ on the regulation of $\text{EWT}(t)$ around $\text{EWT}^*$, before the next action. 

In order to compute this reward function, we note that $\text{EWT}(t)$ is a function of the state $s_t$ and is of the form 
\begin{equation}
	\label{calculate_EWT}
	\text{EWT}(t) = \mathbb{E}_{\Omega_{t}} [ F_{\text{wait}}(s_t , \omega_{t}) ] = F_{\text{EWT}}(s_t)
\end{equation} 
where $F_{\text{wait}}(\cdot , \cdot)$ denotes the waiting time derived via the dynamic routing algorithm as in \cite{guan2019dynamicrouting} for a given $s_t$ and $\omega_{t}$.
% where the expectation is taken over the decision from the passenger ${D_{t_r}}$ in response to $a_{t_r}$. (\ref{reward_function}) measures the average closeness, i.e., the opposite of average deviation, of $\text{EWT}(t)$ from $\text{EWT}^*$, from $t_r^+$ till ${(t_r + \Delta t_r)}^-$ right before when the next action is about to take place. Following the discussions in Section \ref{background}, $\text{EWT}(t)$ is a panel parameter of the SMoDS platform measuring the expected waiting time if an upcoming request is received, where the expectation is taken over $\Omega_{t}$. Hence $\text{EWT}(t)$ is a function of the state $s_t$ following
% where $F_{\text{wait}}(\cdot , \cdot)$ denotes the waiting time derived via the dynamic routing algorithm given $s_t$ and $\omega_{t}$. However, (\ref{calculate_EWT}) might be hard to derive even if $f_{\Omega_{t}}(\omega_{t})$ is known. One could therefore choose appropriate approaches to approximate $\text{EWT}(t)$ instead of deriving it exactly if necessary.   

\subsection{Discount Factor $\gamma$}
\label{discount}

The discount factor $\gamma \in [0, 1]$. Typically $\gamma < 1$, while if the episode is guaranteed to terminate, we could let $\gamma  = 1$.

\subsection{Value Function}
\label{value}

% A policy $\pi(\cdot)$ is a distribution over actions given the state, and an optimal policy $\pi^*(\cdot)$ is one that maximizes the expected total discounted rewards. 
With the discrete time MDP formulation elaborated in Sections \ref{state} through \ref{discount}, we rewrite the objective function in (\ref{objective}) using the reward function in (\ref{reward_function}) as the expected total discounted rewards in the form of 
\begin{equation}
	\label{objective_rewrite}
	R_{\text{total}} = \mathbb{E} \Bigg[ \sum_{k = 0} ^ {\infty} \gamma^k \mathcal{R}_{S_{k \Delta t_r}} ^ {A_{k \Delta t_r}} \Bigg]
\end{equation}
where we let $A_0 = \emptyset$ for ease of notation. The optimal policy $\pi^*(\cdot)$ that maximizes (\ref{objective_rewrite}) can now be derived. 
% either directly via policy based approaches or indirectly via value based ones, or both. 
Using a value based approach, the optimal value function of any state $S_{t_r} = s_{t_r}$ is defined as
\begin{equation}
	\label{value_state}
	v^*(s_{t_r}) = \max_{\pi(\cdot)} \mathbb{E} \Bigg[ \sum_{k = 0} ^ {\infty} \gamma^k \mathcal{R}_{s_{t_r + k \Delta t_r}} ^ {\pi(s_{t_r + k \Delta t_r})} \; \Bigg  | \; s_{t_r}\Bigg]
\end{equation} 
where the expectation is taken over the upcoming requests and the decisions (including $D_{t_r}$) from the passengers in response to the corresponding actions. Using Bellman Optimality Equation, we rewrite (\ref{value_state}) as 
\begin{equation}
	\label{value_state_rewite}
	v^*(s_{t_r}) = \max_{a_{t_r}} \Big\{\mathcal{R}_{s_{t_r}}^{a_{t_r}} + \gamma  \mathbb{E} \Big[v^*(s_{t_r + \Delta t_r}) \; \Big| \; s_{t_r}\Big] \Big\}
\end{equation} 
where the expectation is taken over $D_{t_r}$ and $\Omega_{t_r + \Delta t_r}$. Using (\ref{value_state_rewite}), the optimal policy can be described as 
\begin{equation}
	\label{optimal_policy}
	{\pi}^*(s_{t_r}) = \text{arg}\max_{a_{t_r}} \Big\{ \mathcal{R}_{s_{t_r}}^{a_{t_r}} + \gamma \mathbb{E} \Big[ v^*(s_{t_r + \Delta t_r}) \; \Big| \; s_{t_r} \Big] \Big\}
\end{equation}

When the numbers of active vehicles and passengers are large, and considering that the state space is continuous, deriving $\pi^*(\cdot)$ through $v^*(\cdot)$ exactly may be computationally impractical. Alternatively, one could apply function approximation \cite{bertsekas1995neuro} to derive a parametrized suboptimal policy as 
\begin{equation}
	\label{value_state_rewite_approximation}
	\begin{split}
		\tilde{\pi}^*(s_{t_r};f_v) = \text{arg}\max_{\tilde{a}_{t_r}} \Big\{ \mathcal{R}_{s_{t_r}}^{\tilde{a}_{t_r}} + \gamma \mathbb{E} \Big[ \tilde{v}^*(s_{t_r + \Delta t_r};f_v) \; \Big| \; s_{t_r} \Big] \Big\}
	\end{split}
\end{equation}
where $\tilde{v}^*(\cdot;f_v)$ is an approximation of $v^*(\cdot)$ parametrized by the feature vector $f_v$, whose dimension is much smaller than that of the state and therefore results in more efficient computation in a broad range of applications. Examples of $f_v$ in this context could include occupancy rate of the fleet, number of passengers waiting in the pickup queue, etc. We do not address the details of such function approximations in this manuscript, and is relegated to future work.

With these, the MDP formulation $\langle \mathcal{S}, \mathcal{A}, \mathcal{P}, \mathcal{R}, \gamma \rangle$ is detailed in Equations (\ref{define_S_t}) through (\ref{calculate_EWT}) and we will elaborate the Dynamic Programming algorithm based on (\ref{optimal_policy}) that solves a special offline case of the MDP in Section \ref{algorithms}. 
% The state-action value function and the corresponding function approximation can be defined similarly.  
% Moreover, the action space at any $t_r$ is a function of the state $S_{t_r}$, which can be appropriately parametrized and approximated as well. Denote $f_b$ as the feature vector that parametrizes $S_{t_r}$. Similarly, the dimension of $f_b$ should be relatively small compared with that of the state to ensure efficient computation. The approximation is denoted as   
%\begin{equation}
%	\label{action_bounds_approximation}
%	\Big [ \underline {A_{t_r}}, \overline{A_{t_r}} ; f_b\Big ] = \tilde{f}(S_{t_r}, \Lambda_{t_r} ; f_b)
%\end{equation} 
% In addition, policy or Actor Critic based approaches can also be approximated with appropriate parametrization. With these, one can apply a broad range of learning and planning algorithms to derive $\pi^*(\cdot)$ or $\tilde{\pi}(\cdot; \cdot)$. Function approximation approaches will not be covered in this manuscript and will be out main future directions. 

\section{Dynamic Programming for Determining $\pi^*(\cdot)$}
\label{algorithms}

Through out the rest of the manuscript, we focus on a special case of the MDP which adopts an offline setup. A DP algorithm that solves this MDP is developed and outlined in this section. The offline case demonstrates a proof of concept that $\text{EWT}(t)$ could be effectively regulated around $\text{EWT}^*$ through leveraging the probability of acceptance of empowered passengers. The DP algorithm is readily extended for the general online case as well.

With the offline setup, it is assumed that all ride requests are deterministic and known to the SMoDS server a priori. These requests are processed sequentially, meaning that the ride offers are distributed to the passengers sequentially. The action for the current passenger takes into account future ones and the action for the upcoming passenger may adapt according to the decisions from previous ones. This reduces the cardinality of the state space from continuum to a finite one. In addition, according to Theorem \ref{thm:optimal_action}, the space of each $A_{t_r}$ is finite and has a cardinality of $2$. These enable the development of the value based DP algorithm that derives $\pi^*(\cdot)$ exactly to be feasible. Since the setup is offline, the episodes are guaranteed to terminate and therefore we set $\gamma = 1$.

\begin{figure}[h!]
	\centering
	\includegraphics[width=0.95\textwidth]{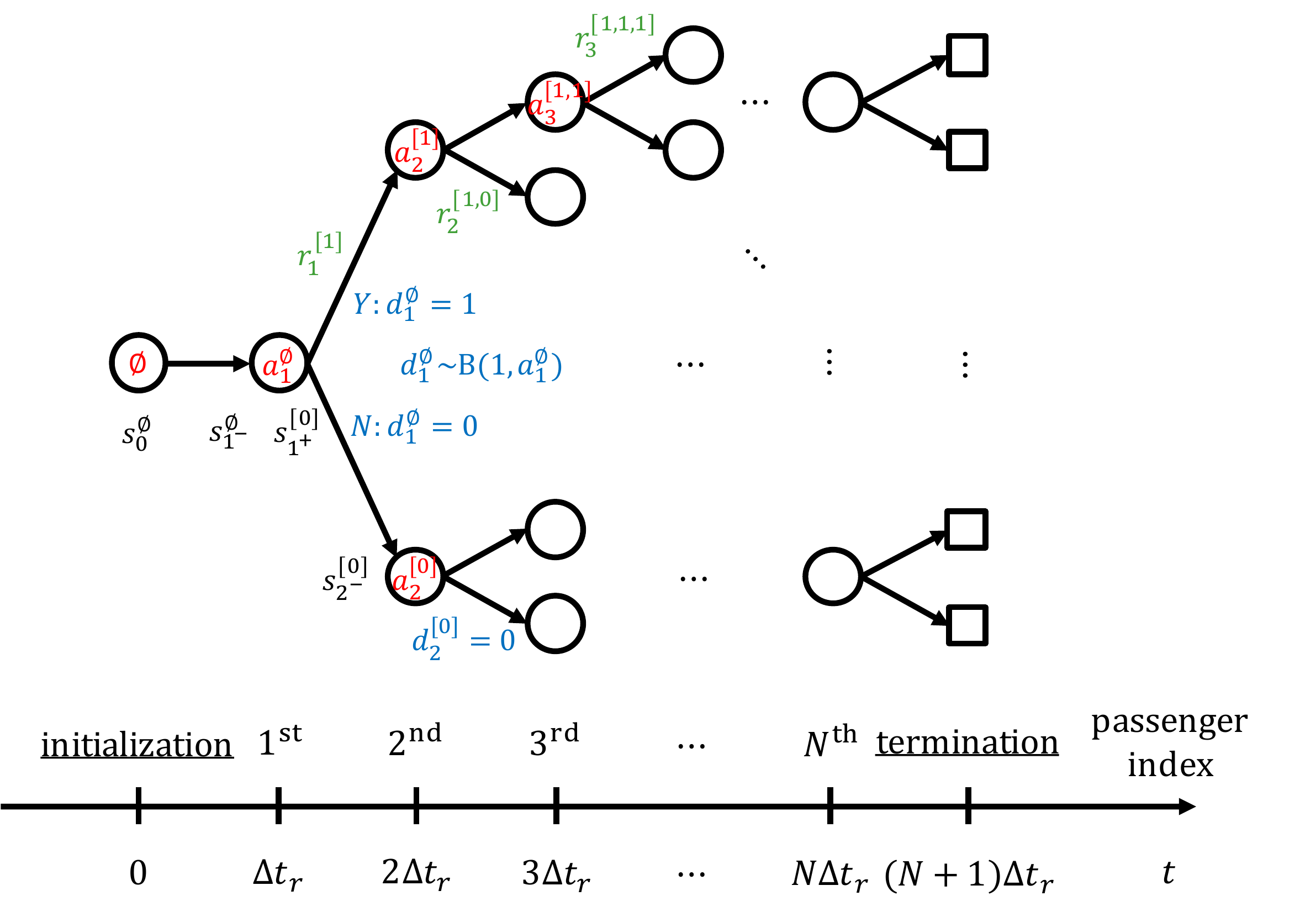}
	\caption{Illustration of the DP algorithm to derive $\pi^*(\cdot)$.}
	\label{fig:tree}
\end{figure}

We now set up the DP algorithm as illustrated in Fig. \ref{fig:tree}. Each vertex in the directed and rooted tree structure shown in the figure represents a possible state where the agent takes an action, and has in turn two children depending on the decision made by the passenger in response. An acceptance of the ride offer by the passenger corresponds to the top branch and a rejection to the bottom, with the action itself indicated in each vertex. The action at the root is denoted as null as no action is taken therein, and no action is indicated at the terminal vertices, represented by black squares. 
% Fig. \ref{fig:tree} illustrates a directed and rooted tree structure that explains the DP algorithm. Except for the root and leaves, i.e., terminal vertices represented by black squares, each vertex represents a circumstance when the agent needs to take an action, and has two children representing two distinct scenarios of subsequent states resulted from the decision of the passenger in response to the action. If the passenger decides to accept the ride offer, the child on the top is chosen and a rejection leads to the one on the bottom. The action is labeled inside each nonterminal vertex, and we let that of the root be $\emptyset$ since no action is taken therein. 
Each edge represents the transition after the decision of the previous passenger until a new request is received and a subsequent action is to be taken, and is associated with the realization of the reward. The height of the tree is $H = N + 1$ and each path with length $H$ represents a complete episode as a result of consecutive actions of the agent and decisions from each passenger. $d_{[k]} \triangleq [d_1, \cdots, d_k], \forall k \in [N]$ represents the decisions of the first $k$ passengers. We have $d_{[k]}$ in the superscripts explicitly since the decisions of previous passengers determine which vertex and edge that the DP algorithm traverses. Moreover, we 
%denote $\omega_{k}, \forall k \in[N]$ as the $k^{\text{th}}$ ride request, and 
let $\omega_{N+1} = \emptyset$ for ease of notation.

The algorithm consists of two steps. The first step is to conduct dynamic routing to process ride requests consecutively which essentially traverses the tree from the root to all leaves, therefore to derive the reward using functions $F_\text{ID}(\cdot, \cdot), F_\text{DR}(\cdot), F_\text{EWT}(\cdot)$ and (\ref{reward_function}) on each edge. The second step is to conduct backward recursion from the leaves to the root, therefore to derive the optimal value function at each state and the corresponding optimal action at each vertex, via Bellman Optimality Equation delineated in (\ref{optimal_policy}). The details of the exact DP algorithm are outlined in Algorithm \ref{DP_exact}.

\begin{algorithm}
	\SetAlgoLined
	\DontPrintSemicolon
	%	\KwData{this text}
	%	\KwResult{how to write algorithm with \LaTeX2e }
	Initialize the route via AltMin and derive $s_{0^+}^{\emptyset}$\;
	Initialize $s_1^{\emptyset} \leftarrow [F_{\text{ID}}(s_{0 ^ +}, \Delta t_r),\omega_1], d_{[0]} \leftarrow \emptyset$\;
	Initialize $v^* \Big( s_{k}^{d_{[k-1]}} \Big) \leftarrow 0, \forall k \in [N + 1], d_{[k-1]}$\; \label{algo_DPE_start}
	\For(\tcp*[f]{step 1: traverse}){$k = 1:N$}{ \label{algo_DPE_mark}
		\For{$d_{[k-1]}$}{
			\For{$d_k = 0:1$}{
				$d_{[k]} \leftarrow \big[ d_{[k-1]}, d_k \big]$\;
				\uIf{$d_k=0$}{$s_{k^+}^{d_{[k]}} \leftarrow s_{k^-}^{d_{[k-1]}}$}
				\Else{$s_{k^+}^{d_{[k]}} \leftarrow F_{\text{DR}} \Big( s_{k}^{d_{[k-1]}} \Big) $}
				$r_k^ {{d_{[k]}}} \leftarrow - \int_{0^+}^{1^-}  \Big|F_{\text{EWT}} \Big(s_{k+\tau}^{d_{[k]}}\Big) - \text{EWT}^* \Big| \; d\tau $\;
				$s_{{(k+1)}^-}^{d_{[k]}} \leftarrow F_{\text{ID}} \Big(s_{k^+}^{d_{[k]}}, \Delta t_r \Big)$\;
				$s_{{k+1}}^{d_{[k]}} \leftarrow \Big[s_{{(k+1)}^-}^{d_{[k]}}, \omega_{k+1} \Big]$\;
			}
		}		
		
	}
	\For(\tcp*[f]{step 2: backup}){$k = N:-1:1$}{
		\For{$d_{[k-1]}$}{
			\uIf{$r_k^ {\big[d_{[k-1]}, 1\big]} + v^*\Big(s_{{k+1}}^{[d_{[k-1]}, 1]} \Big) \geq r_k^ {\big[d_{[k-1]}, 0\big]} + v^* \Big(s_{{k+1}}^{[d_{[k-1]}, 0]} \Big)$ \label{algo_compare_start}}{
				$a_k^{d_{[k-1]}} \leftarrow \overline{a_k}$ 
			}
			\Else{
				$a_k^{d_{[k-1]}} \leftarrow \underline{a_k}$
			}\label{algo_compare_end}
			$v^* \Big(s_{k}^{d_{[k-1]}} \Big) \leftarrow a_k^{d_{[k-1]}} \bigg[r_k^ { \big[d_{[k-1]}, 1 \big]} + v^* \Big(s_{{k+1}}^{[d_{[k-1]}, 1]} \Big) \bigg] + \Big(1 -  a_k^{d_{[k-1]}}\Big) \bigg[r_k^ {\big[d_{[k-1]}, 0\big]} + v^*\Big(s_{{k+1}}^{[d_{[k-1]}, 0]}\Big)\bigg]$
		}
	}
	\label{algo_DPE_end} 
	\caption{Exact-DP Algorithm - $\text{E-DP}(N)$ \label{DP_exact}}
\end{algorithm}

Similar idea can be exploited to develop a heuristic DP algorithm that improves computational efficiency at the cost of optimality, which is summarized in Algorithm \ref{DP_heuristics}. Instead of conducting forward search till the end of the episode, i.e., number of lookahead steps being $N$, Algorithm \ref{DP_heuristics} conducts forward search with steps $\tilde{N} \leq N$ , which serves as a hyper parameter to tune the trade-off between efficiency and optimality. For example, to derive $a_k^{d_{[k-1]}}$, Algorithm \ref{DP_heuristics} essentially carries out E-DP($\tilde{N}$) on the sub tree rooted at vertex $a_k^{d_{[k-1]}}$ with a height of $\tilde{N}$, instead of traversing the entire tree in Fig. \ref{fig:tree}. 
% The pseudo codes are provided in Algorithm \ref{DP_heuristics}. 
% Note that in line \ref{algo_DPE_mark_mark}, if $1 < p < N - \tilde{N} + 1$, only execute when $k=\tilde{N}$ in line \ref{algo_DPE_mark} in Algorithm \ref{DP_exact} since $r_q^{d_{[q-1]}}$ with $q<p + \tilde{N} - 1$ have already been evaluated and stored in previous iterations.

\begin{algorithm}
	\SetAlgoLined
	\DontPrintSemicolon
	%	\KwData{this text}
	%	\KwResult{how to write algorithm with \LaTeX2e }
	\For{$p = 1 : N - \tilde{N} + 1$}{
		\For{$d_{[p-1]}$}{
			$s_{1}^{\emptyset} \leftarrow s_{p}^{d_{[p-1]}}$\;
			\uIf{$1 \leq p < N - \tilde{N} + 1$}{
				execute lines \ref{algo_DPE_start}-\ref{algo_DPE_end} in Algorithm \ref{DP_exact} and derive $a_p^{d_{[p-1]}}$ \label{algo_DPE_mark_mark}
			}
			\Else{
				execute lines \ref{algo_DPE_start}-\ref{algo_DPE_end} in Algorithm \ref{DP_exact} and derive $a_q^{d_{[q-1]}}, \forall N - \tilde{N} + 1 \leq q \leq N$
			}
		}
	} 
	\caption{Heuristic-DP Algorithm - $\text{H-DP}(\tilde{N})$ \label{DP_heuristics}}
\end{algorithm}

\section{A Numerical Case Study}
\label{numerical}

In this section, we present a numerical evaluation of the MDP formulated in Section \ref{MDP} solved via Algorithms \ref{DP_exact} and \ref{DP_heuristics} using a specific case study. Extensions to a general online setup are discussed at the end of this section. 

\subsection{Problem Setup}
\label{setup}

We consider an SMoDS which consists of a single vehicle of capacity 6. 12 ride requests are assumed to be generated, with each request corresponding to a single passenger, and are to be served using this vehicle. The origins and destinations are generated uniformly in a square of one by one mile. The first 4 requests are scheduled at time $t = 0$ and these passengers are assumed to accept the ride offers, which initialize the simulation episode. The following $N=8$ requests are scheduled one by one and arrive 4 minutes apart over an interval of 28 minutes. Dynamic routing is conducted using the AltMin algorithm developed in \cite{guan2019dynamicrouting}. All numerical values assumed are synthetic, and do not correspond to any actual travel data.

The first set of parameters we need to choose are the bounds in (\ref{action_range}). Suppose that the alternative to the SMoDS is an exclusive non-ride sharing service, where the waiting time is set as $\frac{2}{3}\text{EWT}^*$, and the riding time is the direct travel time from the origin to the destination. We empirically choose $\underline {a_{t_r}} = 0.5$ and $\overline{a_{t_r}} = 0.9$ if the total travel time using the SMoDS does not exceed 1.5 times that with the alternative. Otherwise, we choose $\underline {a_{t_r}} = 0.2$ and $\overline{a_{t_r}} = 0.6$. We note that the total travel time is the sum of the waiting time and riding time. In order to compute the reward, $\text{EWT}(t)$ is needed, which is determined by $F_{\text{EWT}}(\cdot)$ as in (\ref{calculate_EWT}). We approximate this function and compute $\text{EWT}(t)$ as the average of the waiting time for four passengers whose pickup locations are at the four corners of the square where all requests are assumed to originate.   
% As the first attempt, and without loss of generality, here we adopt simplified approximations of $\Big \{ \underline {a_{t_r}}, \overline{a_{t_r}} \Big \}$ given $s_{t_r}, \omega_{t_r}$, and of $\text{EWT}(t)$ given $s_t$, respectively. The alternative is considered as the exclusive MoD service without sharing, where the waiting time for each request is set as $\frac{2}{3}\text{EWT}^*$, and the riding time is the direct travel time from the origin to the destination. Given $s_{t_r}$, the specifications of the SMoDS ride offer are derived using AltMin. If the total travel time, i.e., waiting plus riding times of the SMoDS does not exceed $\frac{3}{2}$ of that of the alternative, the action space is set as $\{0.5, 0.9\}$, otherwise $\{0.2, 0.6\}$. Given $s_t$, $\text{EWT}(t)$ is approximated as the average time that four representative passengers who request pickup locations at the four corners of the square would wait. 
As has been discussed in Section \ref{action} and \ref{reward}, accurate derivations of $\underline {a_{t_r}}$, $\overline{a_{t_r}}$ and $\text{EWT}(t)$ require accurate knowledge of the alternative transportation options, passenger behavioral model and distribution of requests, which are beyond the scope of this manuscript. Here in this section, we adopt approximations that are sufficiently reasonable to illustrate the central idea of regulating $\text{EWT}(t)$ via $\pi^*(\cdot)$ by leveraging passenger empowerment. 

\subsection{Results of Computational Experiments}
\label{results}

With the problem setup described above in Section \ref{setup}, we apply the Exact-DP algorithm outlined in Algorithm \ref{DP_exact} and derive $\pi^*(\cdot)$ under various $\text{EWT}^*$ values. The results are summarized in Fig. \ref{fig:basic}. 
The subplots in the top row correspond to $\text{EWT}^* = 4$, 5, and 6 minutes, respectively, indicated by the black dashed lines, and show how the regulation curves (marked in blue), which denote $\mathbb{E}[\text{EWT}(t)]$ under the optimal policy $\pi^*(\cdot)$ derived using E-DP($N$), vary with $t$. 
% For this purpose, we assume that 
For comparison, baseline curves (marked in orange) are also plotted, which denote $\text{EWT}(t)$ that is obtained with the assumption that each ride offer is accepted and every passenger gets on board, which is a deterministic computation. That is, none of the passengers is empowered in this case. It is easy to see that the orange curve illustrates the $\text{EWT}(t)$ that corresponds to the top path shown in Fig. \ref{fig:tree}, while the blue one corresponds to the expectation of all of the paths shown in this figure. It is therefore not surprising that the baseline curve is identical in the three subplots since it does not depend on $\text{EWT}^*$. The subplots in the bottom row of Fig. \ref{fig:tree} show the acceptance rate under $\pi^*(\cdot)$ for each passenger and the mean across all passengers for the same case of $\text{EWT}^*$ values as in the top row. 
% The subplots on the left, middle and right demonstrate the results with $\text{EWT}^*$ being 4, 5, and 6 minutes, respectively. In the subplots on the top, the blue curves illustrate how the expected $\text{EWT}(t)$ varies with $t$ under the optimal policy $\pi^*(\cdot)$ derived using E-DP($N$), the orange curves illustrate $\text{EWT}(t)$ in the benchmark scenario where passengers are not empowered hence all get onboard, and the black curves represent $\text{EWT}^*$. The benchmark curve is identical in the three subplots since it does not depend on $\text{EWT}^*$. The subplots on the bottom summarize expected acceptance rate under $\pi^*(\cdot)$ for each passenger and the mean across all passengers. 

The plots in Fig. \ref{fig:basic} clearly demonstrate that by using the optimal policy $\pi^*(\cdot)$ derived via E-DP($N$), we can effectively regulate $\text{EWT}(t)$ around $\text{EWT}^*$. The best choice of $\text{EWT}^*$, however, is not addressed in this manuscript and beyond its scope. The second observation from Fig. \ref{fig:basic} is that the expectation of the desired probability of acceptance of the passengers increases with $\text{EWT}^*$. This could be due to the fact that $\text{EWT}(t)$ is a metric of the balance between demand and supply within the SMoDS platform. That is, a higher $\text{EWT}(t)$ implies that demand exceeds supply and therefore, the agent desires to accommodate more passengers on board and hence the desired probability of acceptance increases.

It should be noted that the focus here has been to design a specific SMoDS-centric variable, which is the optimal desired probability of acceptance, in the form of an optimal policy of the MDP, which has been carried out under the assumption that the actual probability of acceptance by the empowered passenger will equal this optimal action. To ensure that this assumption is valid, a suitable dynamic tariff needs to be designed; with an increasing $\text{EWT}(t)$, to maintain the corresponding higher acceptance rate, this dynamic tariff should perhaps be suitably decreased. 
Therefore, the overall impacts of increased $\text{EWT}^*$ on the total revenue depend on two competing contributions, one is increased ridership and the other is decreased revenue per ride. Hence $\text{EWT}^*$ should be suitably determined to tune this trade-off in order to achieve desired performances of the SMoDS platform. 
These points are not discussed in detail in this manuscript, and the readers are referred to \cite{guan2019cpt}.    
% Fig. \ref{fig:basic} demonstrate that the blue curves could be brought closer to the target black dashed lines compared with the orange ones, indicating effective regulation of $\text{EWT}(t)$ around $\text{EWT}^*$ through $\pi^*(\cdot)$ derived via E-DP($N$). The regulation can be achieved with different $\text{EWT}^*$ values that are within a reasonable range. Though how to choose an appropriate target $\text{EWT}^*$ value is beyond the scope of this manuscript, we could roughly argue its impacts on the performances of the SMoDS platform. According to Fig. \ref{fig:basic}, the average acceptance rate, i.e., the ridership, increases with $\text{EWT}^*$ because the tolerance of the agent to having passengers wait gets higher therefore it is more likely to get the passengers onboard. 
% However, on the other hand, the downgrade of the service quality might occur due to the increased actual waiting times as a response to increased $\text{EWT}^*$. In order to maintain relatively high acceptance rates, the dynamic tariff should decrease. 
\begin{figure*}[h!]
	\centering
	\includegraphics[width = 0.95\textwidth]{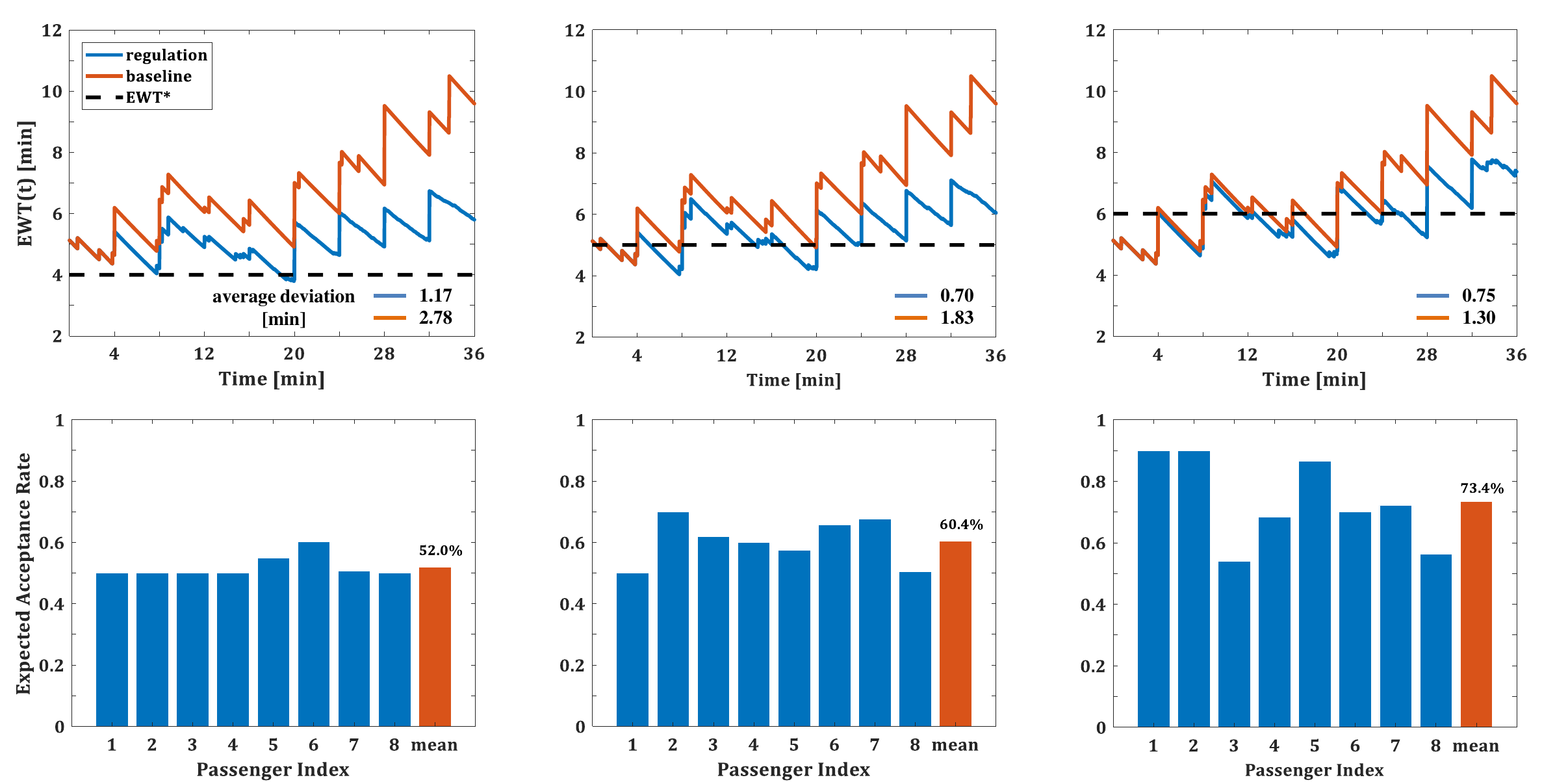}
	\caption{Regulation of $\text{EWT}(t)$ around $\text{EWT}^*$ and the incurred expected acceptance rates of each passenger for different $\text{EWT}^*$ values.}
	\label{fig:basic}
\end{figure*}

We also evaluated E-DP($N$) with a time-varying $\text{EWT}^*$, and the results are shown in Fig. \ref{fig:jump}. That is, $\text{EWT}^*$ = 4min for the first 20 minutes and switched to 6min at $t=20$. It can be seen that E-DP($N$) helps achieve a tight regulation in this case as well. These results support our argument above that $\text{EWT}^*$ could be actively tuned to adapt to the actual conditions of the system during practical operations.  
% In addition to Fig. \ref{fig:basic} where $\text{EWT}^*$ is held constant within episodes, we also experiment with time-varying $\text{EWT}^*$. The results are demonstrated in Fig. \ref{fig:jump}. $\text{EWT}^*$ is set as 4 minutes during the first half of the episode, while when the SMoDS platform gets fairly crowded and $\text{EWT}(t)$ is about to increase substantially, we relax the regulation by increasing $\text{EWT}^*$ up to 6 minutes at time $t = 20 $ minutes, and the regulation is adapted instantaneously. These results support our argument above that $\text{EWT}^*$ could be actively tuned to adapt to the actual conditions of the system during practical operations. 
\begin{figure}[h!]
	\centering
	\includegraphics[width=0.6\textwidth]{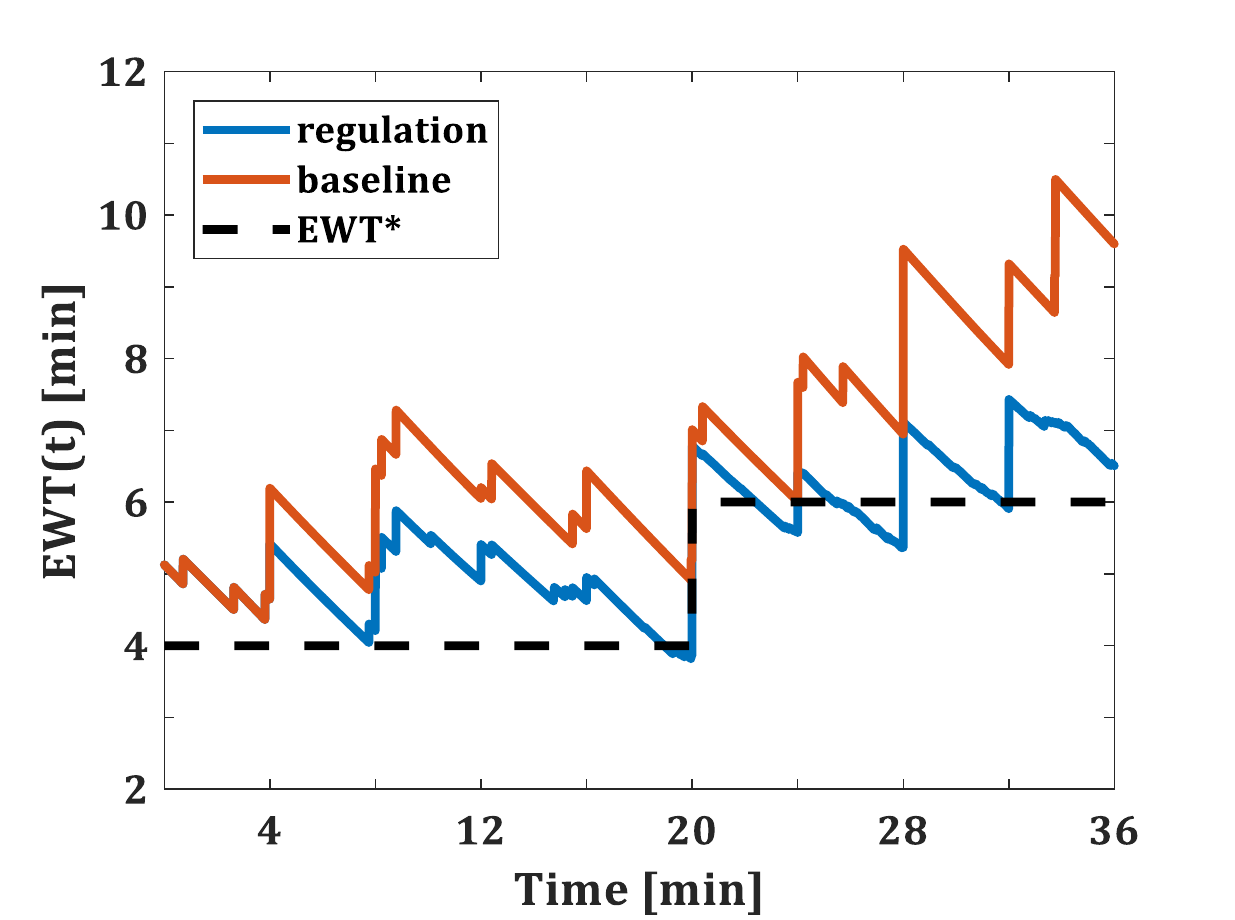}
	\caption{Regulation of $\text{EWT}(t)$ around a time-varying $\text{EWT}^*$.}
	\label{fig:jump}
\end{figure}

We now proceed to the evaluation of H-DP($\tilde{N}$), which is heuristic, and the number of lookahead steps extends up to $\tilde{N} \leq N$ rather than all the way to $N$. The resulting performance of the algorithm for the case where $\text{EWT}^* = 5$ minutes is illustrated in Fig. \ref{fig:heuristics}, where $\tilde{N}$ is varied from 0\footnote{When $\tilde{N} = 0$, actions are determined by directly comparing $F_{\text{EWT}}(s_{t_r^+})$ values, instead of using rewards and value functions in lines \ref{algo_compare_start} through \ref{algo_compare_end} in Algorithm \ref{DP_exact} for cases when $\tilde{N} > 0$.} to 8. This performance is quantified in the form of the average deviation of $\text{EWT}(t)$ from $\text{EWT}^*$, i.e., the negative of the average rewards defined in (\ref{reward_function}). Noting that the performance of H-DP(1) is fairly close to that with H-DP(8)/E-DP(8) in Fig. \ref{fig:heuristics}. It is clear that H-DP($\tilde{N}$) is a useful tool for application in general cases and can lead to a close-to-optimal solution with computational ease. 
% Moreover, the H-DP($\tilde{N}$) algorithm is implemented with various $\tilde{N}$ values for the case where $\text{EWT}^* = 5$ minutes. Fig. \ref{fig:heuristics} plots the average deviation of $\text{EWT}(t)$ from $\text{EWT}^*$, i.e., the negative of the average rewards defined in (\ref{reward_function}), with respect to $\tilde{N}$, under the suboptimal policy derived using H-DP($\tilde{N}$).\footnote{When $\tilde{N} = 0$, actions are determined by comparing $F_{\text{EWT}}(s_{t_r^+})$ values, instead of using rewards and value functions in lines \ref{algo_compare_start} through \ref{algo_compare_end} in Algorithm \ref{DP_exact} for cases when $\tilde{N} > 0$.} Fig. \ref{fig:heuristics} indicates that the optimality of H-DP($\tilde{N}$) is fairly comparable to that of E-DP($N$), for example, with just one lookahead step, comparable regulation in terms of the average deviation can be achieved. Therefore, it is promising to apply H-DP($\tilde{N}$) in more general cases with sufficient computational efficiency and preserved optimality.        
\begin{figure}[h!]
	\centering
	\includegraphics[width=0.6\textwidth]{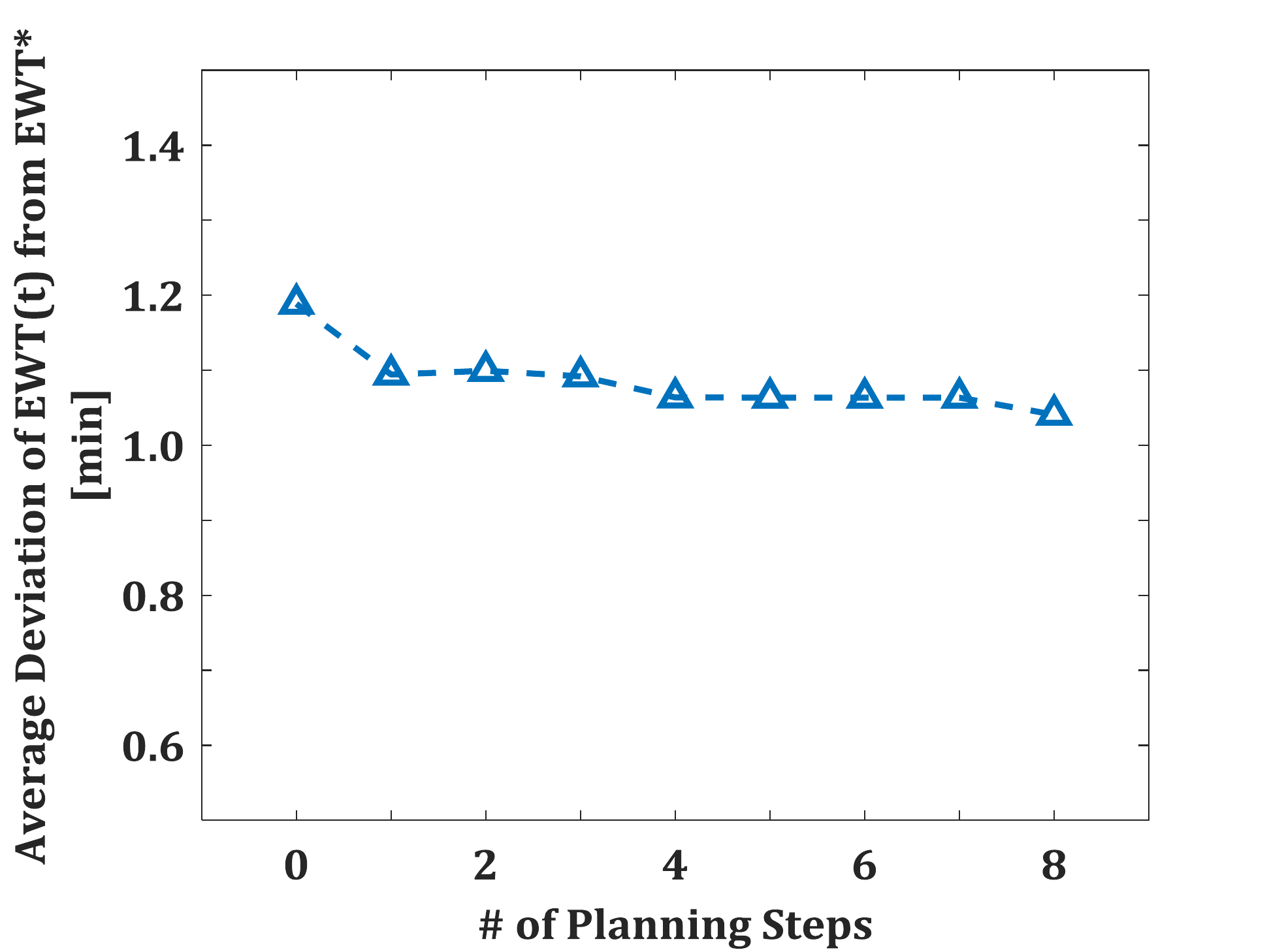}
	\caption{Average deviation of $\text{EWT}(t)$ from $\text{EWT}^*$ with respect to the number of lookahead steps $\tilde{N}$.}
	\label{fig:heuristics}
\end{figure}

\subsection{Remarks}
\label{remarks}

% In this section, we construct a special case of the MDP formulated in Section \ref{MDP}, where an offline setup is deployed. 
% The policies derived via both E-DP($N$) and H-DP($\tilde{N}$) achieves effective regulation of $\text{EWT}(t)$ around $\text{ETW}^*$, under various target $\text{ETW}^*$ values and a time-varying $\text{ETW}^*$ as well. Though direct extension of E-DP($N$) to large state space might be difficult due to computational burden, the results in this section demonstrate proof of concept that the balance between demand and supply within the SMoDS platform can be enhanced significantly through leveraging the probability of acceptance of empowered passengers.
When the state space gets more complicated, either due to fairly large number of requests and vehicles, or the adoption of an online setup, one can exploit function approximation that has been briefly discussed in Section \ref{value} and develop a broad range of planning and learning algorithms. Notably, H-DP($\tilde{N}$) would be quite useful in these complex cases. 
% For example, when dealing with online requests in real time, one can integrate learning and planning. A preliminary value network is learned offline via historical or self-play data, and lookahead search is conducted via H-DP($\tilde{N}$) to refine the value function on the fly and actions are then taken accordingly. This is very similar to the algorithmic architecture of AlphaGo \cite{silver2016mastering}.

\section{Concluding Remarks}
\label{conclusions}

In this manuscript, we propose a discrete time Markov Decision Process (MDP) formulation and develop a Dynamic Programming (DP) algorithm that solves the MDP towards dynamic pricing for Shared Mobility on Demand Service (SMoDS). 
% The optimal policy of the MDP corresponds to the desired probability of acceptance of empowered passengers that enhances balance between demand and supply within the SMoDS platform 
The proposed MDP formulation is a versatile framework explicitly accommodating passenger behavioral modeling that leads to desired system level performances, and can be readily extended to a broad range of scenarios. 
The DP algorithm derives the optimal policy that regulates the Estimated Waiting Time $\text{EWT}(t)$ around a target value $\text{EWT}^*$ and therefore enhances the balance between demand and supply within the SMoDS platform, for a specific scenario of the MDP with an offline setup. 
% In this manuscript, we propose a discrete time Markov Decision Process (MDP) formulation to determine the desired probability of acceptance of empowered passengers towards dynamic pricing in the context of Shared Mobility on Demand Service (SMoDS). 
% The proposed MDP formulation poses a versatile framework explicitly accommodating passenger behavioral modeling that leads to desired system level performances, and can be readily extended to a broad range of scenarios. 
% A Dynamic Programming (DP) algorithm is developed to derive the optimal policy that regulates $\text{EWT}(t)$ around $\text{EWT}^*$ hence improves balance between demand and supply within the SMoDS platform, for a specific scenario of the MDP with an offline setup. 
Computational experiments are carried out that demonstrate effective regulation of $\text{EWT}(t)$ around $\text{EWT}^*$ as a proof of concept that performance metrics can be improved significantly via leveraging passenger empowerment, for various $\text{EWT}^*$ values and for a time-varying $\text{EWT}^*$. 
The heuristic version of the DP algorithm, H-DP($\tilde{N}$), could be exploited as the lookahead search algorithm when extended to large state spaces or online setups. 
The MDP formulation and the DP algorithm, together with our previous works on the AltMin dynamic routing algorithm in \cite{guan2019dynamicrouting} and Cumulative Prospect Theory (CPT) based passenger behavioral modeling in \cite{guan2019cpt}, provide a complete solution to the SMoDS design.     

Future works include developing integrated learning and planning algorithms for MDPs with a large state space or an online setup, and investigating disciplines that guide the choice of appropriate $\text{EWT}^*$ values leading to the desired combination of revenue and ridership for the SMoDS platform. The integration of the CPT based passenger behavioral modeling with the MDP and hence directly designing dynamic tariffs is of interest as well.

\section*{Acknowledgments}

This work was supported by the Ford-MIT Alliance. 

\section*{Appendix: Proofs of Theorems}
\label{appendix}

\vspace{2mm}
\begin{proof}[Proof of Theorem \ref{thm:optimal_action}] 
	\label{proof:optimal_action}
	Expand (\ref{value_state_rewite}), we have
	\begin{equation}
		\label{backup_value}
			\begin{split}
			v^*( s_{t_r}) & = \max_{a_{t_r} \in \Big [\underline{a_{t_r}}, \overline{a_{t_r}} \Big ]}  \bigg\{ \mathcal{R}_{s_{t_r}}^{a_{t_r}} + \gamma  \mathbb{E} \Big[ v^*(s_{t_r + \Delta t_r}) \; \Big| \; s_{t_r} \Big] \bigg\} \\
			& = \max_{a_{t_r} \in \Big [\underline{a_{t_r}}, \overline{a_{t_r}} \Big ]} \bigg\{  a_{t_r} \Big\{r_{t_r}^{[1]} + \gamma \mathbb{E} \Big[ v^* \Big( s_{t_r+\Delta t_r} ^ {[1]} \Big) \Big ] \Big\} + (1 - a_{t_r}) \Big\{r_{t_r}^{[0]} + \gamma \mathbb{E} \Big[ v^* \Big( s_{t_r+\Delta t_r}^{[0]}\Big) \Big ] \Big\} \bigg\} \\
			& = \max_{a_{t_r} \in \Big \{ \underline{a_{t_r}}, \overline{a_{t_r}} \Big \}} \bigg\{ a_{t_r} \Big\{r_{t_r}^{[1]} + \gamma \mathbb{E} \Big[ v^* \Big( s_{t_r+\Delta t_r} ^ {[1]} \Big) \Big ] \Big\} + (1 - a_{t_r}) \Big\{r_{t_r}^{[0]} + \gamma \mathbb{E} \Big[ v^* \Big( s_{t_r+\Delta t_r}^{[0]}\Big) \Big ] \Big\} \bigg\}
		\end{split}
	\end{equation}
	The first equality is equivalent to Bellman Optimality Equation and holds by definition. The second equality holds due to expanding $\mathcal{R}_{s_{t_r}}^{a_{t_r}}$ by taking the expectation of $D_{t_r}$. The third equality holds because the right-hand side of the second equality is linear in $a_{t_r}$, since the remaining expectations are taken of the distribution of $\Omega_{t_r + \Delta t_r}$ and hence $r_{t_r}^{[1]}$, $v^* \Big( s_{t_r+\Delta t_r} ^ {[1]} \Big)$, $r_{t_r}^{[0]}$, or $v^* \Big( s_{t_r+\Delta t_r}^{[0]} \Big)$ do not depend on $a_{t_r}$, therefore the maximum must be reached at either end point. Here $r_{t_r}^{[1]} = - \frac{1}{\Delta t_r} \int_{0^+}^{{(\Delta t_r)}^-}  |F_{\text{EWT}} [F_{\text{ID}}(F_{\text{DR}}(s_{t_r}), \tau)] - \text{EWT}^*| \; d\tau$, $s_{t_r+\Delta t_r} ^ {[1]} = [ F_{\text{ID}} ( F_{\text{DR}}(s_{t_r}), \Delta t_r ) , \omega_{t_r+ \Delta t_r} ]$, $r_{t_r}^{[0]} = - \frac{1}{\Delta t_r} \int_{0^+}^{{(\Delta t_r)}^-}   |F_{\text{EWT}}[F_{\text{ID}}(s_{t_r^-}, \tau)] - \text{EWT}^*| \; d\tau$, and $s_{t_r+\Delta t_r}^{[0]} = [ F_{\text{ID}} ( s_{t_r^-} , \Delta t_r ), \omega_{t_r + \Delta t_r} ]$\footnote{We omit the decisions from previous passengers in the superscripts as the formulas of $r_{t_r}^{[1]}$, $s_{t_r+\Delta t_r} ^ {[1]}$, $r_{t_r}^{[0]}$ and $s_{t_r+\Delta t_r} ^ {[0]}$ hold for any scenario.}. 
\end{proof}

\vspace{2mm}
\begin{proof}[Proof of Theorem \ref{thm:markov}] 
	\label{proof:markov}
	According to (\ref{transition_combine}), we have 
	\begin{equation}
		\label{markov_property}
		\mathbb{P} \Big [S_{k + 1} = s_{k + 1} \; \Big| \; S_k = s_k, \cdots, S_1 = s_1 \Big ] = \mathbb{P} \Big [ S_{k+1} = s_{k+1} \; \Big| \;  S_k = s_k \Big ] 
	\end{equation} 
	$\forall k \in \mathbb{Z}_{> 0}$ and $ \{ s_1, \cdots, s_{k+1} \} \subset \mathcal{S}$. Obviously, (\ref{markov_property}) holds for the first and third subcases in (\ref{transition_combine}). (\ref{markov_property}) holds for the second subcase in (\ref{transition_combine}) because $\mathbb{P} \Big [ S_{k+1} = s_{k+1}\; \Big | \; S_k = s_k \Big ] = \mathbb{P} \Big [ S_{k+1} = s_{k+1}\; \Big | \; S_k = s_k, S_{k^-} = s_{k^-} \Big ] $. Hence $\forall k \in \mathbb{Z}_{>0}$, $S_k$ are Markov.
\end{proof}

%
%\addtolength{\textheight}{-12cm}  % This command serves to balance the column lengths
                                  % on the last page of the document manually. It shortens
                                  % the textheight of the last page by a suitable amount.
                                  % This command does not take effect until the next page
                                  % so it should come on the page before the last. Make
                                  % sure that you do not shorten the textheight too much.

%%%%%%%%%%%%%%%%%%%%%%%%%%%%%%%%%%%%%%%%%%%%%%%%%%%%%%%%%%%%%%%%%%%%%%%%%%%%%%%%

%%%%%%%%%%%%%%%%%%%%%%%%%%%%%%%%%%%%%%%%%%%%%%%%%%%%%%%%%%%%%%%%%%%%%%%%%%%%%%%%

%%%%%%%%%%%%%%%%%%%%%%%%%%%%%%%%%%%%%%%%%%%%%%%%%%%%%%%%%%%%%%%%%%%%%%%%%%%%%%%%
%\section*{Appendix: Proofs of Properties}

%%%%%%%%%%%%%%%%%%%%%%%%%%%%%%%%%%%%%%%%%%%%%%%%%%%%%%%%%%%%%%%%%%%%%%%%%%%%%%%%

% \begin{thebibliography}{99}
	
\bibliographystyle{plain}

\bibliography{references}

% \end{thebibliography}

\end{document}